\newtheorem{theorem}{Theorem}[section]
\newtheorem{problem}[theorem]{Problem}
\newtheorem{proposition}[theorem]{Proposition}
\begin{document}
\author{Gerardo Hern\'andez-del-Valle}
\title[On Schr\"odinger's equation, 3-dimensional bessel bridges]{On Schr\"odinger's equation, 
 3-dimensional Bessel bridges, and passage time problems}
\address{Statistics Deparment, Columbia University\\ Mail Code
4403, New York, N.Y.} \email{gerardo@stat.columbia.edu}
\subjclass[2000]{Primary: 60J65,45D05,60J60; Secondary: 45G15,
45G10, 45Q05, 45K05.}
\date{May 2010}

\maketitle
\begin{abstract}
In this work we relate the density of the first-passage time of a Wiener process to a moving boundary with the three dimensional Bessel bridge process and a solution of the heat equation with a moving boundary. We provide bounds.
\end{abstract}

\section{The problem}\label{sec1}
\begin{problem}
The main motivation of this work is to find the density of the first time $T$, that a one-dimensional, standard Brownian motion $B$, reaches the moving boundary $f$:
\begin{equation}\label{stop}
T:=\inf\left\{t\geq 0|B_t=f(t)\right\}
\end{equation}
where $f(t):=a+\int_0^tf'(u)du$,   $f''(t)>0$, and $\int_0^t(f'(u))^2du<\infty$ for all $t>0$.
\end{problem}
 Alternatively, assuming the previous assumptions:

\begin{proposition} Given that
\begin{equation}\label{level}
h(s,a):=\frac{|a|}{\sqrt{2\pi s^3}}\exp\left\{-\frac{a^2}{2s}\right\}\qquad s\geq 0,\enskip a\in\mathbb{R}
\end{equation}
is the density of the first time that $B$ reaches the fixed level $a$, the process $\tilde{X}$ is a 3-dimensional Bessel bridge, which has the following dynamics:
\begin{equation*}
d\tilde{X}_t=dW_t+\left(\frac{1}{\tilde{X}_t}-\frac{\tilde{X}_t}{s-t}\right)dt,\qquad \tilde{X}_0=a,\enskip t\in[0,s).
\end{equation*}
(Where $W$ is a Wiener process.)
Then, the distribution of $T$ equals
\begin{eqnarray}
\nonumber\mathbb{P}(T<t)&=&
\int_0^t\tilde{\mathbb{E}}\left[\exp\left\{-\int_0^sf''(u)\tilde{X}_udu\right\}\right]\\
\label{cota}&&\quad\times \exp\left\{-\frac{1}{2}\int_0^s(f'(u))^2du-f'(0)a\right\}h(s,a)ds
\end{eqnarray}
\end{proposition}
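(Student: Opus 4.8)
The plan is to remove the drift created by the moving boundary by a Girsanov transformation, thereby reducing the passage time $T$ to a \emph{fixed}-level first-passage time whose density is the known function $h$, and then to recognise the residual Radon--Nikodym weight as an exponential functional of a Brownian motion \emph{conditioned} on its first passage time---which, by the classical $h$-transform (Doob) description, is exactly the $3$-dimensional Bessel bridge $\tilde X$ appearing in the statement.

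First I would replace the pair $(B,f)$ by the single process $Y_t:=f(t)-B_t$, so that $Y_0=f(0)=a$, $dY_t=f'(t)\,dt+d\beta_t$ with $\beta:=-B$ a standard Brownian motion, and $T=\inf\{t\ge 0:Y_t=0\}$. Let
\[
M_t:=\exp\left\{-\int_0^t f'(u)\,d\beta_u-\tfrac12\int_0^t (f'(u))^2\,du\right\},
\]
a nonnegative local martingale, and define $\mathbb Q$ by $d\mathbb Q/d\mathbb P\big|_{\mathcal F_t}=M_t$; since $f'$ is deterministic with $\int_0^t(f'(u))^2\,du<\infty$, Novikov's criterion is automatic, so $M$ (and, as one checks, $M^{-1}$ under $\mathbb Q$) is a true martingale on each $[0,t]$. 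Girsanov's theorem shows that under $\mathbb Q$ the process $Y$ is a standard Brownian motion started at $a$, so $T$ has $\mathbb Q$-density $h(\cdot,a)$ by the hypothesis on $h$ in the proposition.

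Next I would write $\mathbb P(T\le t)=\mathbb E^{\mathbb Q}[M_t^{-1}\mathbf 1_{\{T\le t\}}]$ and apply optional sampling of the $\mathbb Q$-martingale $M^{-1}$ at the bounded stopping time $T\wedge t$ to get $\mathbb P(T\le t)=\mathbb E^{\mathbb Q}[M_T^{-1}\mathbf 1_{\{T\le t\}}]$. Using $d\beta_u=dY_u-f'(u)\,du$ gives $M_T^{-1}=\exp\{\int_0^T f'(u)\,dY_u-\tfrac12\int_0^T (f'(u))^2\,du\}$, and integration by parts (legitimate because $f''>0$ makes $f'$ increasing, hence of finite variation) together with $Y_T=0$ on $\{T<\infty\}$ turns $\int_0^T f'(u)\,dY_u$ into $-f'(0)a-\int_0^T f''(u)Y_u\,du$. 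Conditioning on $\{T=s\}$ under $\mathbb Q$---which extracts the deterministic factor $\exp\{-f'(0)a-\tfrac12\int_0^s(f'(u))^2\,du\}$ and replaces the law of $T$ by $h(s,a)\,ds$---then yields
\[
\mathbb P(T\le t)=\int_0^t \mathbb E^{\mathbb Q}\!\left[\exp\left\{-\int_0^s f''(u)Y_u\,du\right\}\,\Big|\,T=s\right]\exp\left\{-\tfrac12\int_0^s(f'(u))^2\,du-f'(0)a\right\}h(s,a)\,ds .
\]
It remains to identify $\mathbb E^{\mathbb Q}[\,\cdot\mid T=s]$ with $\tilde{\mathbb E}$. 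Under $\mathbb Q$ the Markov property shows that a Brownian motion started at $a>0$ and conditioned on $\{T=s\}$ is, on $[0,s)$, the Doob $h$-transform of Brownian motion killed at $0$ by the space-time harmonic function $(t,y)\mapsto h(s-t,y)$; since $\partial_y\log h(s-t,y)=\frac1y-\frac{y}{s-t}$, this transform has precisely the dynamics $d\tilde X_t=dW_t+\big(\frac{1}{\tilde X_t}-\frac{\tilde X_t}{s-t}\big)\,dt$, $\tilde X_0=a$, i.e. it is the $3$-dimensional Bessel bridge from $a$ to $0$ over $[0,s]$ described in the statement. Substituting and using $\mathbb P(T<t)=\mathbb P(T\le t)$ (no atoms) gives \eqref{cota}.

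The algebra above is routine; the main obstacle is the careful disintegration on $\{T=s\}$ and the accompanying identification of the conditioned process, which rests on the classical Bessel-bridge description of Brownian motion conditioned on its first passage time---to be cited, or re-derived via the displayed $h$-transform computation (e.g. along the lines of Williams' path decomposition). A minor point is a Fubini argument to interchange the $\mathbb Q$-expectation with the $ds$-integral; here the signs $f''>0$ and $Y\ge 0$ on $[0,T]$ are what make this harmless, since they force the integrand in \eqref{cota} to lie in $(0,1]$. The same positivity immediately yields the upper bound $\mathbb P(T<t)\le\int_0^t \exp\{-\tfrac12\int_0^s(f'(u))^2\,du-f'(0)a\}h(s,a)\,ds$, and a matching lower bound follows by Jensen's inequality from $\tilde{\mathbb E}[\tilde X_u]$, which is presumably the source of the bounds announced in the abstract.
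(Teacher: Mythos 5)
Your proposal is correct and follows essentially the same route as the paper: a Girsanov change of measure to reduce $T$ to a fixed-level passage time with density $h$, optional sampling to evaluate the Radon--Nikodym weight at $T$, integration by parts on $\int_0^T f'\,dY$, and disintegration over $\{T=s\}$ with the conditioned process identified as the $3$-dimensional Bessel bridge. The only cosmetic difference is that you work with $Y=f-B$ hitting $0$ from $a$, whereas the paper works with the Brownian motion hitting level $a$ and then writes the conditioned path as $a-\tilde X$; your explicit $h$-transform computation of the bridge dynamics supplies a detail the paper merely cites to Revuz and Yor.
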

\begin{proof} From Girsanov's theorem, the fact that the boundary $f$ is twice continuously differentiable, the following Radon-Nikodym derivative:
\begin{eqnarray*}
\frac{d\mathbb{P}}{d\tilde{\mathbb{P}}}(t)&:=&\exp\left\{-\int_0^tf'(s)d\tilde{B}_s-\frac{1}{2}\int_0^t(f'(s))^2ds\right\}\\
&:=&\exp\left\{-f'(t)\tilde{B}_t+\int_0^tf''(s)\tilde{B}_sds-\frac{1}{2}\int_0^t(f'(s))^2ds\right\},
\end{eqnarray*}
is indeed a {\it martingale\/} [Novikov's condition] and induces the following relationship:
\begin{eqnarray*}
\mathbb{P}(T\leq
t)&:=&\tilde{\mathbb{E}}\left[\frac{d\mathbb{P}}{d\tilde{\mathbb{P}}}(t)\mathbb{I}_{(T\leq
t)}\right]\\
&=&\tilde{\mathbb{E}}\left[\frac{d\mathbb{P}}{d\tilde{\mathbb{P}}}(T)\mathbb{I}_{(T\leq
t)}\right]\\
&=&\int_0^t\tilde{\mathbb{E}}\left[\frac{d\mathbb{P}}{d\tilde{\mathbb{P}}}(T)\Big{|}T=s\right]h(s,a)ds,
\end{eqnarray*}
where the second equality follows from the optional sampling theorem, and the third   from conditioning with respect to $T$ under $\tilde{P}$, and $h$ is defined in (\ref{level}).

In Revuz and Yor (1999) (Chapter 11), it is shown that calculating:
\begin{equation}\label{expected}
\tilde{\mathbb{E}}\left[\frac{d\mathbb{P}}{d\tilde{\mathbb{P}}}(T)\Big{|}T=s\right]
\end{equation}
is equivalent to finding the expected value of a functional of a 3-dimensional Bessel bridge $\tilde{X}$, which at time 0 starts at $a$ and at time $s$ equals 0. Indeed,
\begin{eqnarray}
&&\nonumber\tilde{\mathbb{E}}\left[\frac{d\mathbb{P}}{d\tilde{\mathbb{P}}}(T)\Big{|}T=s\right]\\
&&\nonumber\quad=
\tilde{\mathbb{E}}\Bigg{[}\exp\Bigg{\{}-f'(T)a+\int_0^Tf''(u)\tilde{B}_udu\\
&&\nonumber\quad\qquad -\frac{1}{2}\int_0^T(f'(u))^2du\Bigg{\}}\Big{|}T=s\Bigg{]}\\
&&\nonumber\quad=
\tilde{\mathbb{E}}\left[\exp\left\{-f'(s)a+\int_0^sf''(u)(a-\tilde{X}_u)du-\frac{1}{2}\int_0^s(f'(u))^2du\right\}\right]\\
&&\nonumber\quad =
\tilde{\mathbb{E}}\Bigg{[}\exp\Bigg{\{}-f'(s)a+(f'(s)-f'(0))a-\int_0^sf''(u)\tilde{X}_udu\\
&&\nonumber\quad\qquad-\frac{1}{2}\int_0^s(f'(u))^2du\Bigg{\} }\Bigg{]}\\
&&\label{problem}\quad =
\tilde{\mathbb{E}}\left[\exp\left\{-\int_0^sf''(u)\tilde{X}_udu\right\}\right]\\
&&\nonumber\qquad\quad\times\exp\left\{-\frac{1}{2}\int_0^s(f'(u))^2du-f'(0)a\right\}
\end{eqnarray}
 thus the process $a-\tilde{X}$  equals 0 at $t=0$, and at $s$ reaches level $a$ for the first time, as required [see Figures \ref{f1}--\ref{f3}].

\end{proof}

From equation (\ref{problem}) it is clear that our next goal is to compute the following expected value:
\begin{equation}\label{ex}
\tilde{\mathbb{E}}\left[\exp\left\{-\int_0^sf''(u)\tilde{X}_udu\right\}\right].
\end{equation}

\begin{theorem}
Suppose that $v(t,a):[0,s]\times \mathbb{R}^+\to\mathbb{R}^+$ is continuous, is of class $\mathbb{C}^{1,2}$ and satisfies the Cauchy problem
\begin{eqnarray}
\label{cau}-\frac{\partial v}{\partial t}+f''(t)av&=&\frac{1}{2}\frac{\partial^2 v}{\partial a^2}+\left(\frac{1}{a}-\frac{a}{s-t}\right)\frac{\partial v}{\partial a}\qquad[0,s)\times\mathbb{R}^+,\\
\nonumber v(s,a)&=&1,\qquad a\in\mathbb{R}^+,
\end{eqnarray}
as well as
\begin{eqnarray*}
0\leq v(t,a)\leq 1\qquad \forall\enskip t,a\in\mathbb{R}^+
\end{eqnarray*}
Then $v(t,a)$ admits the stochastic representation
\begin{eqnarray*}
v(t,a)=\mathbb{E}^{t,a}\left[\exp\left\{-\int_t^sf''(u)\tilde{X}_udu\right\}\right]
\end{eqnarray*}
\end{theorem}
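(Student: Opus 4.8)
The plan is to prove this Feynman--Kac type representation by the standard martingale argument. First I would fix a starting point $(t,a)\in[0,s)\times\mathbb{R}^+$ and consider the process $\tilde{X}$ solving the given SDE with $\tilde{X}_t=a$, together with the discounting factor; the natural candidate martingale is
\begin{equation*}
M_r:=v(r,\tilde{X}_r)\exp\left\{-\int_t^r f''(u)\tilde{X}_u\,du\right\},\qquad r\in[t,s).
\end{equation*}
Applying It\^o's formula to $M_r$ and using that $v$ is $\mathbb{C}^{1,2}$ and solves the Cauchy problem \eqref{cau}, the finite-variation part cancels exactly: the $dr$-terms collect into
$\exp\{-\int_t^r f''\tilde{X}\,du\}\bigl(\tfrac{\partial v}{\partial t}+\tfrac12\tfrac{\partial^2 v}{\partial a^2}+(\tfrac1{\tilde{X}_r}-\tfrac{\tilde{X}_r}{s-r})\tfrac{\partial v}{\partial a}-f''(r)\tilde{X}_r v\bigr)$, which vanishes by \eqref{cau}. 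Hence $M$ is a local martingale with $dM_r=\exp\{-\int_t^r f''\tilde{X}\,du\}\,\tfrac{\partial v}{\partial a}(r,\tilde{X}_r)\,dW_r$.

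Next I would upgrade ``local martingale'' to ``martingale'' on $[t,s)$ and then pass to the limit $r\uparrow s$. The bound $0\le v\le 1$ together with $f''>0$ and $\tilde{X}\ge 0$ gives $0\le M_r\le 1$, so $M$ is a bounded local martingale, hence a genuine (uniformly integrable) martingale on $[t,s)$. This yields, for every $r<s$,
\begin{equation*}
v(t,a)=\mathbb{E}^{t,a}[M_r]=\mathbb{E}^{t,a}\left[v(r,\tilde{X}_r)\exp\left\{-\int_t^r f''(u)\tilde{X}_u\,du\right\}\right].
\end{equation*}
Letting $r\uparrow s$, bounded convergence (again using $0\le M_r\le1$) plus the continuity of $v$ up to $t=s$ and the terminal condition $v(s,\cdot)\equiv 1$, together with $\tilde{X}_{s-}=0$ and the a.s.\ finiteness of $\int_t^s f''(u)\tilde{X}_u\,du$ (which follows since the Bessel bridge is bounded on $[t,s]$ and $f''$ is continuous there), give $v(t,a)=\mathbb{E}^{t,a}\bigl[\exp\{-\int_t^s f''(u)\tilde{X}_u\,du\}\bigr]$, as claimed.

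The main obstacle is the behavior at the singular endpoint $r=s$: the drift coefficient $-\tilde{X}_r/(s-r)$ blows up there, and one must be careful that It\^o's formula is legitimately applied only on compact subintervals $[t,r]\subset[t,s)$ and that the limit $r\uparrow s$ is justified rigorously. The key points making this work are (i) the uniform bound $0\le v\le1$, which tames both the stochastic integral and the limit, and (ii) the standard fact (Revuz--Yor, Ch.\ XI) that the 3-dimensional Bessel bridge has continuous paths on the closed interval $[0,s]$ with $\tilde{X}_s=0$, so that $\int_t^s f''(u)\tilde{X}_u\,du$ is a well-defined finite random variable and $v(r,\tilde{X}_r)\to v(s,0)=1$ almost surely. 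A secondary technical point is confirming that $\tfrac{\partial v}{\partial a}(r,\tilde{X}_r)$ is square-integrable on each $[t,r]$ so that the stochastic integral is a true martingale there; this follows from continuity of $v_a$ on the compact set and the integrability of the Bessel-bridge occupation measure, but it is worth noting explicitly.
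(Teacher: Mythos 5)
Your argument is correct and rests on the same engine as the paper's proof: apply It\^o's formula to $v(r,\tilde{X}_r)\exp\{-\int_t^r f''(u)\tilde{X}_u\,du\}$, let the PDE \eqref{cau} annihilate the drift term, and pass to a limit. Where you differ is in the localization. The paper (following Karatzas--Shreve, Theorem 5.7.6) localizes in \emph{space}: it stops at $\tau_n=\inf\{y\ge t:\tilde{X}_y\ge n\}$, runs It\^o up to $s\wedge\tau_n$, and kills the boundary term on $\{\tau_n\le s\}$ via $0\le v\le 1$, Chebyshev, and the Pitman--Yor moment bounds for $\max_{t\le\theta\le s}\tilde{X}_\theta$. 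You localize in \emph{time}: you stop at deterministic $r<s$, observe that $0\le M_r\le 1$ makes the local martingale a genuine bounded martingale on $[t,s)$ with no moment estimates needed, and then let $r\uparrow s$ by bounded convergence. Your route buys two things: it dispenses with the Pitman--Yor input entirely (boundedness of $v$ already does all the work, and indeed the paper's moment bound is stronger than needed), and it handles more carefully the singularity of the drift $-a/(s-r)$ at $r=s$, which the paper's application of It\^o up to time $s$ on $\{\tau_n>s\}$ passes over silently. The one shared delicate point, which you rightly flag, is that the limit uses $v(r,\tilde{X}_r)\to v(s,0)=1$, i.e.\ continuity of $v$ up to the corner $(s,0)$ where the terminal condition is only stated for $a\in\mathbb{R}^+$; both proofs need this and neither can avoid it.
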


\begin{proof} We proceed as in the proof of Theorem 5.7.6, pp. 366--367, Karatzas \& Shreve (1991).
Applying Ito's rule to the process 
$$v(y,X_y)\exp\left\{-\int^t_yf''(u)\tilde{X}_udu\right\},$$  $y\in[t,s]$, and obtain, with $\tau_n:=\inf\{t\leq y\leq s| \tilde{X}_y\geq n\}$,
\begin{eqnarray*}
v(t,a)&=&\tilde{\mathbb{E}}^{t,a}\left[\exp\left\{-\int_t^sf''(u)\tilde{X}_udu\right\}\mathbb{I}_{(\tau_n>s)}\right]\\
&&+\tilde{\mathbb{E}}^{t,a}\left[v(\tau_n,X_{\tau_n})\exp\left\{-\int_t^{\tau_n}f''(u)\tilde{X}_udu\right\}\mathbb{I}_{(\tau_n\leq s)}\right]
\end{eqnarray*}
The second term converges to zero as $n\to\infty$, since
\begin{eqnarray*}
&&\tilde{\mathbb{E}}^{t,a}\left[v(\tau_n,X_{\tau_n})\exp\left\{-\int_t^{\tau_n}f''(u)\tilde{X}_udu\right\}\mathbb{I}_{(\tau_n\leq s)}\right]\\&&\qquad\qquad\leq \tilde{\mathbb{E}}^{t,x}\left[v(\tau_n,\tilde{X}_{\tau_n})\mathbb{I}_{(\tau_n\leq s)}\right]\\
&&\qquad\qquad\leq\tilde{\mathbb{P}}^{t,x}(\tau_n\leq s)\\
&&\qquad\qquad=\tilde{\mathbb{P}}^{t,x}\left(\max\limits_{t\leq\theta\leq s}\tilde{X}_\theta\geq n\right)\\
&&\qquad\qquad\leq \frac{\tilde{\mathbb{E}}^{t,x}\left[\max\limits_{t\leq\theta\leq s}\tilde{X}_\theta^{2m}\right]}{n^{2m}}
\end{eqnarray*}
[see Pitman and Yor (1998) for the moments of the running maximum of $\tilde{X}$]. Finally, the first term converges to
\begin{eqnarray*}
\tilde{\mathbb{E}}^{t,a}\left[\exp\left\{-\int_t^sf''(u)\tilde{X}_udu\right\}\right]
\end{eqnarray*}
either by the dominated or by the monotone convergence theorem.
\end{proof}

\vfill\eject 
\begin{figure}
\begin{center}
\includegraphics[height=2.1in,width=2.5in]{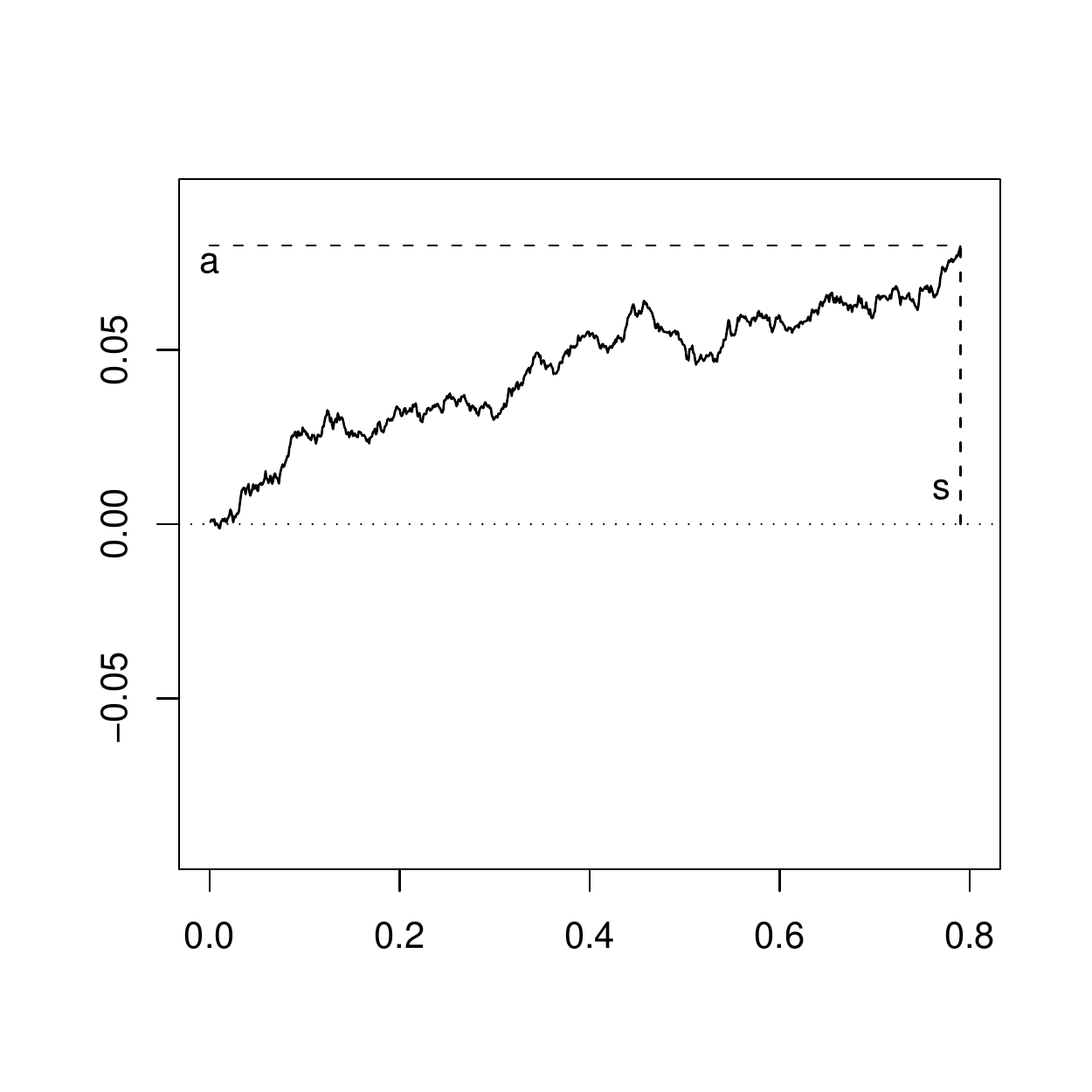}
\caption{(a) Brownian motion conditioned to hit level $a$ for the first time at time $s$.}\label{f1}
\includegraphics[height=2.1in,width=2.5in,scale=0.5]{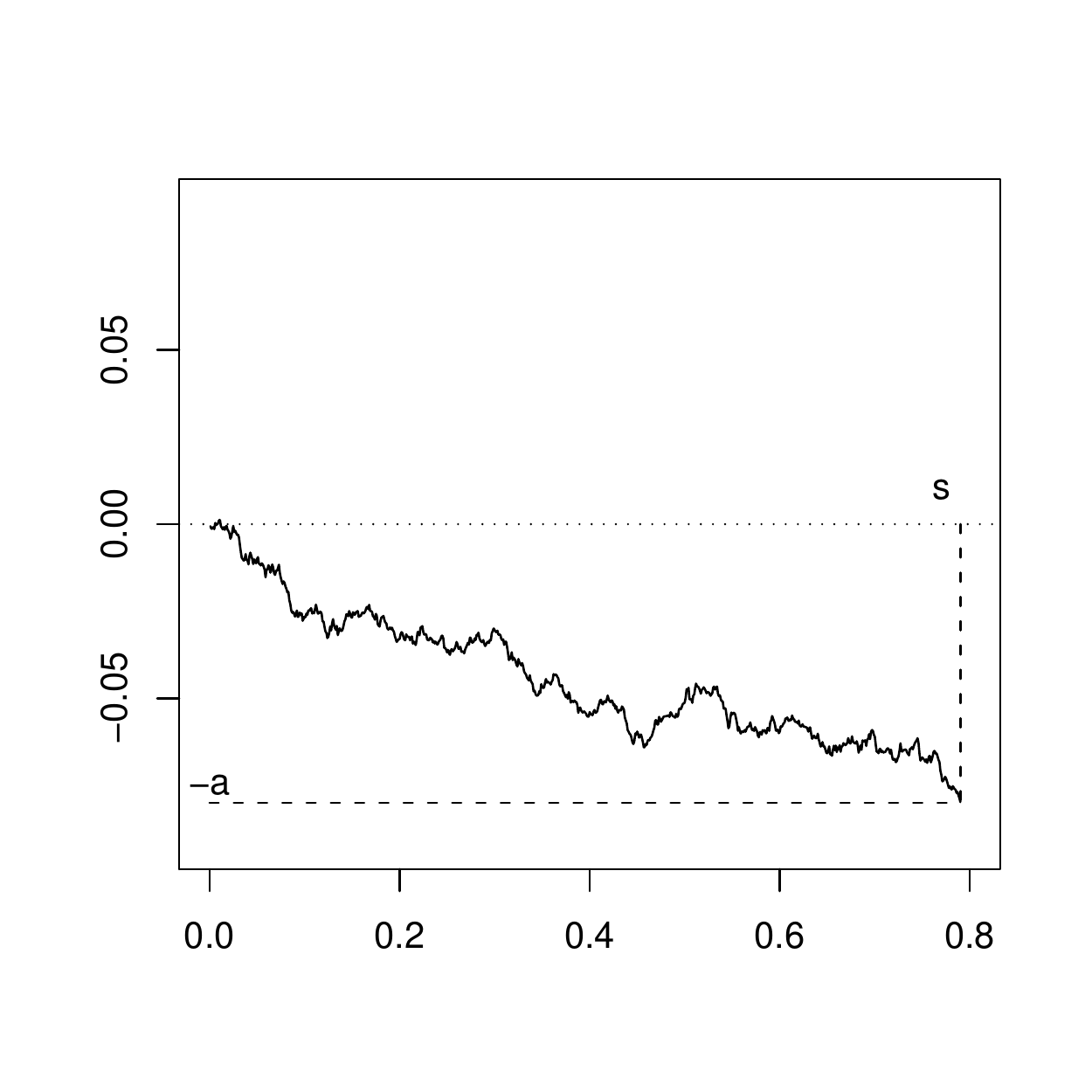}
\caption{(b) Reflection of the conditioned process.}\label{f2}
\includegraphics[height=2.1in,width=2.5in,scale=0.5]{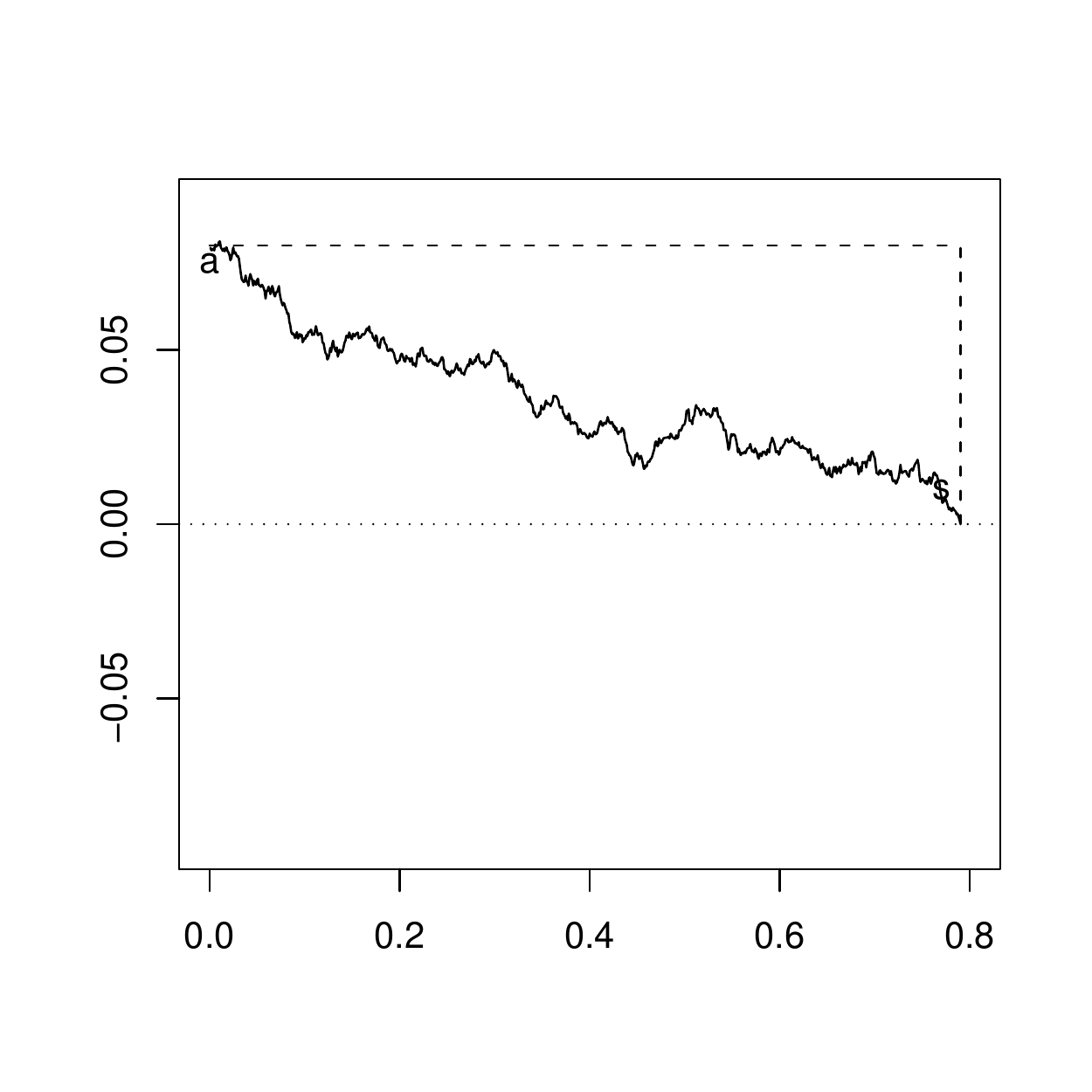}
\caption{(c) Three-dimesional Bessel bridge $\tilde{X}$}\label{f3}
\end{center}
\end{figure}

\vfill\eject

\section{Solutions of equation (\ref{cau})}
\begin{proposition} Solutions to (\ref{cau}) are of the following form
$$
v(t,a)=\frac{w(t,a)}{h(s-t,a)}
$$
where
\begin{eqnarray}\label{schro}
-w_t(t,a)+f''(t)aw(t,a)=\frac{1}{2}w_{aa}(t,a).
\end{eqnarray}
and $h(s,a)$ as in (\ref{level}).
\end{proposition}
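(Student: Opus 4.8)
The plan is to recognise the first-order operator in (\ref{cau}) as a Doob $h$-transform of the backward heat operator, with transforming function $g(t,a):=h(s-t,a)$, and then simply to substitute $v=w/g$ and read off the equation satisfied by $w$.

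First I would record two elementary facts about $g$. Writing $\Gamma(u,a)=(2\pi u)^{-1/2}e^{-a^2/2u}$ for the Gaussian kernel, a direct computation gives $h(u,a)=-\partial_a\Gamma(u,a)$ for $a>0$; since $\Gamma$ solves $\partial_u\Gamma=\tfrac12\partial_{aa}\Gamma$ and $\partial_a$ commutes with this equation, $h$ solves the heat equation in $(u,a)$ as well. Hence $g(t,a)=h(s-t,a)$ is strictly positive on $(0,s)\times\mathbb{R}^+$ and satisfies the backward heat equation $g_t+\tfrac12 g_{aa}=0$. Second, taking logarithms in (\ref{level}) gives $\partial_a\log g(t,a)=\tfrac1a-\tfrac{a}{s-t}$, equivalently $g_a=\bigl(\tfrac1a-\tfrac{a}{s-t}\bigr)g$, which is exactly the drift appearing in (\ref{cau}) and in the dynamics of $\tilde X$.

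With these identities in hand, set $v=w/g$ in (\ref{cau}) and expand the derivatives by the quotient rule. The resulting first-order terms in $w$ have combined coefficient $\bigl(\tfrac1a-\tfrac{a}{s-t}\bigr)-g_a/g$, which vanishes by the second fact; the zeroth-order terms in $w$ collect to $-g^{-1}\bigl(g_t+\tfrac12 g_{aa}\bigr)w$, which vanishes by the first fact. What survives is precisely $-w_t+f''(t)a\,w=\tfrac12 w_{aa}$, i.e.\ (\ref{schro}). Since $g>0$ on the domain, $w\mapsto w/g$ is a bijection, so this also shows conversely that every solution of (\ref{cau}) has the stated form $w/h(s-t,a)$ with $w$ solving (\ref{schro}).

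The argument is a routine substitution; the only points requiring care are bookkeeping ones — keeping the heat equation in the variable $u=s-t$ distinct from the backward heat equation in $t$, and observing that dividing by $g$ is legitimate only on the open region $\{a>0,\ t<s\}$, where $g$ does not vanish. One may also note that under $v=w/g$ the terminal condition $v(s,\cdot)\equiv 1$ translates into $w(t,a)\sim h(s-t,a)$ as $t\uparrow s$, so no solutions are lost, although this is not part of the stated conclusion.
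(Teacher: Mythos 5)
Your proof is correct, and it verifies the same substitution $v=w/h(s-t,\cdot)$ that the paper uses, but it organizes the verification in a genuinely different and more structural way. The paper computes the three partial derivatives of $u(t,a)=1/h(s-t,a)$ explicitly and then checks by direct expansion that the two bracketed coefficients $-u_t-\tfrac12u_{aa}-\bigl(\tfrac1a-\tfrac{a}{s-t}\bigr)u_a$ and $u_a+\bigl(\tfrac1a-\tfrac{a}{s-t}\bigr)u$ both vanish; this is a term-by-term cancellation of $1/a^2$, $1/(s-t)$ and $a^2/(s-t)^2$ contributions. You instead isolate the two identities that make the computation work: $h(s-t,a)$ is space--time harmonic for the backward heat operator (via $h(u,a)=-\partial_a\Gamma(u,a)$ and commutation of $\partial_a$ with the heat equation), and its logarithmic space derivative reproduces exactly the Bessel-bridge drift $\tfrac1a-\tfrac{a}{s-t}$. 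This is the Doob $h$-transform picture, and it buys two things: the cancellations are explained rather than merely observed (the first-order terms cancel because the drift is $g_a/g$, the zeroth-order terms because $g_t+\tfrac12 g_{aa}=0$), and the argument generalizes immediately to any positive space--time harmonic function in place of $h$. Your closing remarks on invertibility of $w\mapsto w/g$ on $\{a>0,\ t<s\}$ and on the behaviour of the terminal condition are correct and go slightly beyond what the paper records. I verified both of your key identities against the explicit form (\ref{level}): $\partial_a\log h(u,a)=\tfrac1a-\tfrac{a}{u}$ and $h=-\partial_a\Gamma$ both hold for $a>0$, so the argument is complete.
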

\begin{proof} 
Setting
$$u(t,a)=1/h(s-t,a)$$
and $v(t,a)=u(t,a)w(t,a)$, we have that
\begin{eqnarray*}
u_t(t,a)&=&\left[\frac{a^2}{2(s-t)^2}-\frac{3}{2(s-t)}\right]u(t,a)\\
 u_a(t,a)&=&-\left[\frac{1}{a}-\frac{a}{s-t}\right]u(t,a)\\
u_{aa}(t,a)&=&\left[\frac{2}{a^2}+\frac{a^2}{(s-t)^2}-\frac{1}{s-t}\right]u(t,a)
\end{eqnarray*}
and 
\begin{eqnarray*}
v_t&=&u_tw+uw_t\qquad v_x=u_xw+uw_x\\
v_{xx}&=&u_{xx}w+2u_xw_w+uw_{xx}.
\end{eqnarray*}
Hence from (\ref{cau}) and (\ref{schro}) it follows that
\begin{eqnarray*}
\left[-u_t-\frac{1}{2}u_{aa}-\left(\frac{1}{a}-\frac{a}{s-t}\right)u_a\right]w=\left[u_a+\left(\frac{1}{a}-\frac{a}{s-t}\right)u\right]w_a,
\end{eqnarray*}
as claimed.
\end{proof}
\begin{theorem}
Solutions to (\ref{schro}) are given by
\begin{eqnarray}
\label{sch1}&&\\
\nonumber w(t,a)&=&e^{\frac{1}{2}\int_t^s(f'(u))^2du+af'(t)}\frac{1}{2\pi}\int_{-\infty}^\infty \Pi(y)e^{-\frac{1}{2}y^2(s-t)+iy(a+\int_t^sf'(u)du)}dy\\
\nonumber&=&e^{\frac{1}{2}\int_t^s(f'(u))^2du+af'(t)}\omega(s-t,a+\int_t^sf'(u)du)
\end{eqnarray}
where $\omega$ is a solution to the heat equation.
\end{theorem}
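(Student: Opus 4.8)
The plan is to reduce the Schr\"odinger-type equation \eqref{schro} --- a backward heat equation carrying the time-dependent linear potential $f''(t)a$ --- to the ordinary heat equation by combining a gauge (multiplicative) transformation with a change of variables into a drifting frame. Concretely, I would seek $w$ in the form
\[
w(t,a)=e^{\phi(t,a)}\,\omega(\tau,\xi),\qquad \tau:=s-t,\qquad \xi:=a+\int_t^sf'(u)\,du,
\]
with $\phi(t,a):=\tfrac12\int_t^s(f'(u))^2\,du+af'(t)$, and prove that $w$ solves \eqref{schro} if and only if $\omega$ solves $\omega_\tau=\tfrac12\omega_{\xi\xi}$. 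Heuristically, the factor $e^{\phi}$ is designed to absorb the potential term $f''(t)aw$, while the shift $\xi$, whose $t$-derivative is $-f'(t)$, is designed to cancel the first-order term that $e^{\phi}$ itself generates.

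The verification is a bookkeeping exercise with the chain rule. From the definition of $\phi$ one has $\phi_t=-\tfrac12(f'(t))^2+af''(t)$, $\phi_a=f'(t)$, $\phi_{aa}=0$, and from the definitions of $\tau,\xi$ one has $\tau_t=-1$, $\xi_t=-f'(t)$, $\xi_a=1$. Hence
\[
w_t=e^{\phi}\bigl(\phi_t\omega-\omega_\tau-f'(t)\omega_\xi\bigr),\qquad w_a=e^{\phi}\bigl(f'(t)\omega+\omega_\xi\bigr),\qquad w_{aa}=e^{\phi}\bigl((f'(t))^2\omega+2f'(t)\omega_\xi+\omega_{\xi\xi}\bigr).
\]
Substituting these into $-w_t+f''(t)aw=\tfrac12 w_{aa}$ and dividing by $e^{\phi}$, the terms in $f'(t)\omega_\xi$ cancel immediately, and once $\phi_t$ is inserted the terms in $af''(t)\omega$ cancel and the terms in $\tfrac12(f'(t))^2\omega$ cancel, leaving precisely $\omega_\tau=\tfrac12\omega_{\xi\xi}$. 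Since every step is reversible, this simultaneously shows that the displayed formula defines a solution of \eqref{schro} whenever $\omega$ solves the heat equation, and that all solutions of \eqref{schro} arise this way; this is the content of the second line of \eqref{sch1}.

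For the first line of \eqref{sch1} I would appeal to the Fourier representation of heat-equation solutions: under appropriate growth/decay hypotheses, every solution of $\omega_\tau=\tfrac12\omega_{\xi\xi}$ may be written as $\omega(\tau,\xi)=\tfrac{1}{2\pi}\int_{-\infty}^\infty\Pi(y)e^{-\frac12 y^2\tau+iy\xi}\,dy$, with $\Pi$ essentially the Fourier transform of the profile $\omega(0,\cdot)$; conversely any integral of this shape solves the heat equation, since $\partial_\tau\bigl(e^{-\frac12 y^2\tau+iy\xi}\bigr)=\tfrac12\partial_\xi^2\bigl(e^{-\frac12 y^2\tau+iy\xi}\bigr)$ and one may differentiate under the integral sign. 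Substituting $\tau=s-t$ and $\xi=a+\int_t^sf'(u)\,du$ into $e^{\phi}\omega$ then yields the stated closed form.

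I expect the algebra above to be entirely routine; the one place that needs genuine care is the analytic side of the Fourier step --- pinning down the class of data (equivalently, of kernels $\Pi$) for which the integral converges, differentiation under the integral sign is legitimate, and the representation is actually exhaustive --- together with carrying the restriction of the spatial variable to $\mathbb{R}^+$, inherited from the Cauchy problem \eqref{cau}, through the change of variables. Those are the points I would treat most carefully.
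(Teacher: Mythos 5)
Your argument is correct, and it takes a genuinely different route from the paper. The paper works on the Fourier side: it transforms (\ref{schro}) in $a$, obtaining a first-order equation for $\hat{w}(t,\lambda)$, removes the transport term $if''(t)\hat{w}_\lambda$ by the complex substitution $y=\lambda+if'(t)$, solves the resulting ODE in $t$ to get $\tilde{w}(t,y)=\Pi(y)\exp\{-\frac{1}{2}\int_t^s(y-if'(u))^2du\}$, and then inverts. That derivation \emph{produces} the formula rather than merely checking it, but it silently requires $w(t,\cdot)$ to be Fourier-transformable on all of $\mathbb{R}$ and, in the inversion step, shifts the integration contour from $\mathrm{Im}(y)=f'(t)$ back to the real axis, which needs analyticity and decay of $\Pi$ that are never stated. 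Your gauge-plus-moving-frame computation is the real-variable shadow of the same substitution ($e^{af'(t)}$ and the shift by $\int_t^sf'(u)du$ are exactly what $y=\lambda+if'(t)$ does on the transform side), and it buys a clean, pointwise, reversible equivalence between classical solutions of (\ref{schro}) and of the heat equation, with no integrability hypotheses; the price is that the ansatz must be guessed, which you motivate adequately. Your closing caveats are well placed: the Fourier representation in the first line of (\ref{sch1}) is the only part of the statement that genuinely needs conditions on $\Pi$ (convergence, differentiation under the integral, exhaustiveness of the representation), and the restriction $a\in\mathbb{R}^+$ inherited from (\ref{cau}) is glossed over in the paper as well. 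I would only ask you to note explicitly that ``all solutions arise this way'' then holds at the level of the second line of (\ref{sch1}) (every solution of (\ref{schro}) equals $e^{\phi}\omega$ for some heat-equation solution $\omega$), while the integral formula in the first line represents only those $\omega$ admitting such a kernel $\Pi$.
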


\begin{proof} Let
\begin{eqnarray}\label{fou}
\hat{w}(t,\lambda):=\int\limits_{-\infty}^\infty e^{-i\lambda a}w(t,a)da.
\end{eqnarray}
Applying the Fourier transform to (\ref{schro}) we have
\begin{eqnarray*}
-\hat{w}_t(t,\lambda)+if''(t)\hat{w}_\lambda(t,\lambda)+\frac{1}{2}\lambda^2\hat{w}(t,\lambda)=0\qquad i:=\sqrt{-1}.
\end{eqnarray*}
Next, set $y=\lambda+if'(t)$ and $\hat{w}(t,\lambda)=\tilde{w}(t,y)$, that is:
\begin{eqnarray*}
\hat{w}_t=\tilde{w}_t+if''(t)\tilde{w}_y,\qquad \hat{w}_\lambda=\tilde{w}_y
\end{eqnarray*}
which after substitution in (\ref{fou}) leads to
$$-\tilde{w}_t(t,y)+\frac{1}{2}(y-if'(t))^2\tilde{w}(t,y)=0.$$
Consequently
\begin{eqnarray*}
\tilde{w}(t,y)&=&\Pi(y)\exp\left\{-\frac{1}{2}\int_t^s(y-if'(u))^2du\right\}\\
&=&\Pi(y)\exp\left\{-\frac{1}{2}y^2(s-t)+iy\int_t^sf'(u)du+\frac{1}{2}\int_t^s(f'(u))^2du\right\}
\end{eqnarray*}
which alternatively implies that
\begin{eqnarray*}
w(t,a)&=&\frac{1}{2\pi}\int_{-\infty}^\infty \Pi(y)e^{-\frac{1}{2}y^2(s-t)+iy\int_t^sf'(u)du+\frac{1}{2}\int_t^s(f'(u))^2du}e^{i\lambda a}d\lambda\\
&=&\frac{1}{2\pi}\int_{-\infty}^\infty \Pi(y)e^{-\frac{1}{2}y^2(s-t)+iy\int_t^sf'(u)du+\frac{1}{2}\int_t^s(f'(u))^2du}e^{iy a+af'(t)}dy\\
&=&e^{\frac{1}{2}\int_t^s(f'(u))^2du+af'(t)}\frac{1}{2\pi}\int_{-\infty}^\infty \Pi(y)e^{-\frac{1}{2}y^2(s-t)+iy(a+\int_t^sf'(u)du)}dy
\end{eqnarray*}
as claimed.
\end{proof}

\begin{theorem} The density $\omega$ of the first passage time $T$ defined in (\ref{stop}),  is bounded by
\begin{eqnarray*}
&&h(s,a)e^{-af'(0)-\frac{1}{2}\int_0^s(f'(u))^2du-\int_0^sf''(u)\mathbb{E}^a(\tilde{X}_u)du}\\
&&\qquad \qquad\leq \omega\left(s, a+\int_0^sf'(u)du\right)\\
&&\qquad\qquad \qquad\qquad\leq h(s,a)e^{-af'(0)-\frac{1}{2}\int_0^s(f'(u))^2du}
\end{eqnarray*}
where $\omega$ is a solution of the forward heat equation and $h$ is as in (\ref{level}).
\end{theorem}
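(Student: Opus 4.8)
The plan is to combine the stochastic representation from the first theorem with the chain of identities established in the Proposition of Section~1, and then squeeze the exponential functional of the Bessel bridge between $0$ and $1$ using Jensen's inequality. Concretely, by the Proposition the density of $T$ at time $s$ is
\[
\omega\!\left(s,a+\int_0^sf'(u)\,du\right)=\tilde{\mathbb{E}}^{a}\!\left[\exp\left\{-\int_0^sf''(u)\tilde{X}_u\,du\right\}\right]\exp\left\{-\tfrac{1}{2}\int_0^s(f'(u))^2\,du-af'(0)\right\}h(s,a),
\]
where I read the left-hand side off equations~(\ref{cota}) and (\ref{sch1}): the density $\omega$ evaluated at the shifted spatial argument $a+\int_0^s f'(u)\,du$ is exactly the first-passage density appearing in the integrand of (\ref{cota}). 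So the entire problem reduces to bounding the single scalar quantity
\[
v(0,a)=\tilde{\mathbb{E}}^{a}\!\left[\exp\left\{-\int_0^sf''(u)\tilde{X}_u\,du\right\}\right].
\]

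First I would establish the upper bound. Since $f''>0$ by hypothesis and the three-dimensional Bessel bridge $\tilde{X}$ is nonnegative, the integrand $\int_0^s f''(u)\tilde{X}_u\,du$ is almost surely nonnegative, hence $\exp\{-\int_0^s f''(u)\tilde{X}_u\,du\}\le 1$ pointwise, and taking expectations gives $v(0,a)\le 1$. Multiplying through by the (nonnegative) factor $\exp\{-\tfrac12\int_0^s(f'(u))^2du-af'(0)\}h(s,a)$ yields the stated upper bound; this is the easy half, and it is also consistent with the a priori bound $0\le v\le 1$ imposed in the hypotheses of the stochastic-representation theorem.

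For the lower bound I would apply Jensen's inequality to the convex function $x\mapsto e^{-x}$:
\[
\tilde{\mathbb{E}}^{a}\!\left[\exp\left\{-\int_0^sf''(u)\tilde{X}_u\,du\right\}\right]\ \ge\ \exp\left\{-\tilde{\mathbb{E}}^{a}\!\left[\int_0^sf''(u)\tilde{X}_u\,du\right]\right\}=\exp\left\{-\int_0^sf''(u)\,\mathbb{E}^{a}(\tilde{X}_u)\,du\right\},
\]
the last step being Fubini/Tonelli, which is justified because $f''\ge 0$ and $\tilde{X}_u\ge 0$ so everything in sight is nonnegative (and the moment bounds of Pitman and Yor, already invoked in the previous proof, guarantee finiteness of $\mathbb{E}^a(\tilde X_u)$ and local integrability in $u$). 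Substituting this into the displayed identity for $\omega$ produces precisely the claimed lower bound. The main technical point to be careful about — the only place a referee might push back — is the interchange of expectation and the time integral and, relatedly, making sure $\mathbb{E}^{a}(\tilde{X}_u)$ is finite and the map $u\mapsto \mathbb{E}^a(\tilde X_u)$ is integrable against $f''$ on $[0,s)$; near the terminal time $u\uparrow s$ the bridge is pinned at $0$ so $\mathbb{E}^a(\tilde X_u)\to 0$, which actually helps integrability, but one should note that $f''$ is continuous on the closed interval and hence bounded there, so no singularity arises. With these observations the two inequalities follow immediately and the proof is complete.
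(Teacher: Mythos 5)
Your proposal is correct and follows essentially the same route as the paper: you identify $\omega\left(s,a+\int_0^sf'(u)du\right)$ with the first-passage density via the representation $v=w/h$ together with (\ref{cota}) and (\ref{sch1}), bound the Bessel-bridge functional above by $1$ using $f''>0$ and $\tilde{X}\geq 0$, and below by Jensen's inequality. The only difference is that you spell out the Fubini/Tonelli interchange and the finiteness of $\mathbb{E}^a(\tilde{X}_u)$, which the paper leaves implicit; this is a welcome addition but not a change of method.
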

\begin{proof}
It follows from  Jensen's inequality,
\begin{eqnarray*}
\exp\left\{-\int_0^sf''(u)\tilde{\mathbb{E}}^{t,a}(\tilde{X}_u)du\right\}\leq\tilde{\mathbb{E}}^{t,a}[\exp\left\{-\int_0^sf''(u)\tilde{X}_udu\right\}]\leq 1
\end{eqnarray*}
and equations (\ref{schro}) and (\ref{sch1}).
\end{proof}


\end{document}